\documentclass[a4paper]{amsart} 

\usepackage{amsfonts}
\usepackage{amsmath}  
\usepackage{amscd}
\usepackage{graphicx}
\usepackage{psfrag}

\usepackage{amssymb}
\usepackage{latexsym}
\usepackage{framed, color}

\setlength{\topmargin}{0mm}
\newtheorem{thm}{\sc Theorem}[section]

\newtheorem{prop}[thm]{\sc Proposition}
\theoremstyle{definition}
\newtheorem{dfn}[thm]{\sc Definition}

\theoremstyle{remark}
\newtheorem{exam}[thm]{\sc Example}
\newtheorem{rmk}[thm]{\sc Remark}
\newtheorem{prob}[thm]{\sc Problem}
\makeatletter
 
 \@addtoreset{equation}{section}
\makeatother

\newcommand{\R}{\mathbf{R}}

\newcommand{\Z}{\mathbf{Z}}

\newcommand{\Int}{\mathop{\mathrm{Int}}\nolimits}

\newcommand{\rank}{\mathop{\mathrm{rank}}\nolimits}

\renewcommand{\setminus}{\smallsetminus}
\def\spmapright#1{\smash{%
 \mathop{\hbox to 1.3cm{\rightarrowfill}}
  \limits^{#1}}}

\title[Elimination of definite fold II]{Elimination of definite fold II}
\dedicatory{Dedicated to Professor Takashi
Nishimura on the occasion of his 60th birthday}
\author{Osamu Saeki} 
\address{Institute of Mathematics for Industry,
Kyushu University,
Motooka 744, Nishi-ku, Fukuoka 819-0395, Japan}

\email{saeki@imi.kyushu-u.ac.jp}
\date{\today}
\keywords{Definite fold, elimination of
singularities, excellent map, fold, cusp, stable map, simple stable map,
broken Lefschetz fibration,
contact structure}
\subjclass[2000]{Primary
57R45; 
Secondary
57R35, 
58K30. 
}

\begin{document}
\begin{abstract}
In this paper, we first give a new simple proof to the
elimination theorem of definite fold by homotopy for generic smooth
maps of manifolds of dimension strictly greater than $2$
into the $2$--sphere or into the real projective plane.
Our new proof has the
advantage that it is not only constructive, but is 
also algorithmic: the procedures
enable us to construct various
explicit examples. We also study simple stable maps
of $3$--manifolds into the $2$--sphere without definite fold.
Furthermore, we prove the
non-existence of singular Legendre fibrations
on $3$--manifolds, answering negatively to a question posed
in our previous paper.
\end{abstract}

\maketitle 

\section{Introduction}\label{section1}

This is a continuation of our previous paper \cite{Saeki06} in which
we proved that for an arbitrary $C^\infty$ stable (or excellent)
map of a closed
manifold of dimension $n > 2$ into the $2$--sphere $S^2$ 
or into the real projective plane $\R P^2$,
we can eliminate definite fold points by homotopy.

The present paper has basically four purposes. The first one
is to give a new simple proof to the elimination theorem
of definite fold in which we modify a given map in
its homotopy class (see \S\ref{section2}). Recall that in \cite{Saeki06}, we used
surgery operations, which modified the source manifold
of the given map to another manifold. This made an explicit
construction very hard to realize. On the contrary, our
new proof presented in this paper modifies the original
map only by homotopy: furthermore, it is not only constructive,
but is also algorithmic, which
is an important ingredient for the simplification process
for maps on $4$--manifolds
proposed in \cite{BS} (see also \cite{BS2}).

The second purpose of the present paper is to give several explicit examples
of $C^\infty$ stable maps without definite fold points (see \S\ref{section3}).
We explicitly construct such a map in each element
of the homotopy groups $\pi_3(S^2) \cong \Z$ and $\pi_4(S^2) \cong \Z_2$.
We also construct explicit $C^\infty$ stable maps on homotopy $n$--spheres,
$n \geq 5$, without definite fold points. Note that these examples 
are now easy consequences of our new constructive proof.

The third purpose of the present paper is to study
simple stable maps of $3$--manifolds into $S^2$ without
definite fold (see \S\ref{section4}). 
We show that a closed orientable $3$--manifold admits
a $C^\infty$ stable map without definite fold into $S^2$ with embedded
indefinite fold image if and only if it is a graph manifold, by
combining our techniques with those developed in \cite{Saeki4}.
We also study the construction of such a $C^\infty$ stable
map in a given homotopy class.

The final purpose of the present paper is to show that
there exist no singular Legendre fibrations on
an arbitrary orientable $3$--manifold, answering negatively
to a problem posed in \cite[\S3]{Saeki06} (see \S\ref{section5}).
Recall that it had been known that such a singular Legendre fibration
must be an excellent map without definite fold.
We will show that even locally, there exists no
contact structure which makes fibers near
an indefinite fold point all Legendrian.
The result is somewhat surprising, but the proof
is elementary.

Throughout the paper, manifolds and maps
are differentiable of class $C^\infty$
unless otherwise indicated.
A stable map will always mean a $C^\infty$ stable map
(for stable maps, the reader is referred to \cite{GG, Levine1}).

\section{A new proof}\label{section2}

Let $M$ be a closed $n$--dimensional manifold
with $n \geq 2$ and $N$ a surface.
For a smooth map $f : M \to N$,
we denote the set of singular points by
$$S(f) = \{x \in M\,|\, \rank{df_x} < 2 \}.$$

\begin{dfn}\label{dfn1}
(1) A singular point $x \in S(f)$ of $f$ is called a
\emph{fold point} if there exist
local coordinates $(x_1, x_2, \ldots, x_n)$ around
$x$ and $(y_1, y_2)$ around $f(x)$ such that
$f$ has the form
$$y_1 \circ f = x_1, \quad 
y_2 \circ f =
-x_2^2 - \cdots - x_{1+\lambda}^2 + x_{2+\lambda}^2
+ \cdots + x_n^2
$$
for some $\lambda$ with $0 \leq \lambda \leq n-1$.
The integer $\lambda$ is called
the \emph{index} of the fold point with respect
to the $y_2$--direction: this means that $x$ corresponds
to a non-degenerate critical point of index $\lambda$
for the function $y_2 \circ f$ restricted to
the submanifold $y_1 = 0$.
We say that $x$ is a \emph{definite fold
point} if $\lambda = 0$ or $n-1$; otherwise, it is an
\emph{indefinite fold point}.
We denote by $S_0(f)$ (resp.\ $S_1(f)$)
the set of definite (resp.\ indefinite)
fold points of $f$.

(2) A singular point $x \in S(f)$ of $f$ is called a
\emph{cusp point} 
if there exist
local coordinates $(x_1, x_2, \ldots, x_n)$ around
$x$ and $(y_1, y_2)$ around $f(x)$ such that
$f$ has the form
$$y_1 \circ f = x_1, \quad
y_2 \circ f =
x_1x_2+ x_2^3 \pm x_3^2 \pm \cdots \pm x_n^2.
$$

(3) A smooth map $f : M \to N$ is called an \emph{excellent
map} if $S(f)$ consists only of fold and cusp points.
\end{dfn}

It is known that every smooth map $f : M \to N$
can be approximated by (and hence is homotopic to)
an excellent map (or a stable map).
Note that a smooth map $f : M \to N$ is stable if and only if
it is excellent and $f|_{S(f)}$ satisfies certain normal crossing
conditions (for details, see \cite{GG}, for example).

The following theorem was announced and proved
in \cite[Theorem~2.6]{Saeki06}. Here we give a much
simpler proof without performing surgeries of manifolds.

\begin{thm}\label{thm}
Let $M$ be a closed manifold of dimension $n
> 2$ and $N$ be the $2$--sphere $S^2$ or the
real projective plane $\R P^2$.
Then every continuous map $f : M
\to N$ is homotopic to a $C^\infty$ stable
map without definite fold points.
\end{thm}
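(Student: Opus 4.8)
The plan is to start from an arbitrary continuous map $f : M \to N$, replace it by a homotopic stable (excellent) map — which is possible by the standard jet-transversality results cited above — and then remove the definite fold points one at a time by a sequence of local homotopies supported in neighborhoods of arcs in $M$. The key observation is that a definite fold point of index $0$ or $n-1$ has, near its image, fibers that are small spheres $S^{n-2}$ bounding disks in $M$; such a component of $S(f)$ is a closed $(n-2)$--submanifold mapped by $f$ to an immersed curve in $N$, and the ``innermost'' region over which $f$ looks like the projection $D^{n-1} \times S^1 \to$ (annulus) (or a disk-bundle analogue) is the place where we can intervene. So first I would isolate a single circle (or arc) $C \subset S_0(f)$ of definite folds whose image is an embedded arc in $N$, and whose preimage neighborhood in $M$ is a standard model.

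Next, the heart of the argument is a local modification lemma: given the standard local model of a definite fold over an arc, together with an indefinite fold arc running parallel to it (or, if none is available, first creating a cancelling indefinite/definite pair by an unfolding/birth move), one performs a homotopy — essentially pushing the definite fold through the indefinite one, the two-dimensional analogue of cancelling a pair of Morse critical points of adjacent index along a gradient trajectory. Concretely, I would exhibit an explicit formula for a one-parameter family $f_t$ of maps $D^n \to \R^2$ interpolating between the ``folded'' picture and a picture with fewer (or no) definite fold components, keeping everything stable (or at least excellent) for generic $t$ and controlling what happens to the indefinite locus. Since $n > 2$, the fiber $S^{n-2}$ of a definite fold is connected and simply connected for $n \geq 4$ (and for $n = 3$ it is $S^1$, which is where one must be a bit more careful and use the hypothesis $N = S^2$ or $\R P^2$ to guarantee enough room), so the disk it bounds can be used to guide the homotopy.

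Then I would argue that this local move can be iterated: each application of the move strictly decreases the number of connected components of $S_0(f)$ (or keeps it the same while simplifying the configuration, with an obvious termination argument), and does not create new definite folds, so after finitely many steps we arrive at a map $g$ homotopic to $f$ with $S_0(g) = \emptyset$. A final general-position perturbation, supported away from where we have already arranged $S_0 = \emptyset$, makes $g$ stable; since being indefinite is an open condition and stability perturbations are $C^\infty$--small, no definite folds are reintroduced. The use of $N = S^2$ or $\R P^2$ enters precisely in guaranteeing that the needed arcs and the regions bounded by fold images sit inside $N$ in a controlled way, and (for the $\R P^2$ case) that orientation issues along the indefinite locus do not obstruct the push.

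The main obstacle I expect is the local modification lemma itself — writing down an honest $C^\infty$ homotopy that eliminates a definite fold component without secretly creating another one elsewhere, and verifying that the intermediate maps remain excellent (only folds and cusps, with the cusps appearing and disappearing in cancelling pairs). In particular one must handle the ``birth'' step carefully: if $S_1(f)$ is empty near the definite fold we want to kill, we first need to create an indefinite fold to push against, and checking that this creation move is itself realizable by homotopy and interacts correctly with the subsequent cancellation is the delicate part. I would expect the bulk of \S\ref{section2} to be devoted to making these local pictures precise, presumably with explicit coordinate formulas, which is also what makes the proof ``algorithmic'' in the sense advertised in the introduction.
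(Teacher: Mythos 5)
Your proposal captures the right local picture for the very last step (the paper's final move is indeed a cancellation of the definite fold against cusps and indefinite folds, via flips, Reidemeister~II moves and unflips in the sense of \cite[Fig.~7]{GK0} and \cite[Lemma~4.8]{GK}), but it is missing the essential global part of the argument, and as written the strategy would fail. The elimination move cannot be applied to an arbitrary definite fold component one at a time: it requires that the component be a circle, that its image be \emph{embedded} in $N$, and that this image bound a $2$--disk $\Delta$ on the side \emph{away} from the image of a tubular neighborhood of the fold, with $\Delta$ meeting no other singular image. Producing such a configuration is a global problem, and it is exactly where the hypothesis $N = S^2$ or $\R P^2$ enters. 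The paper spends most of the proof on this preparation: flips turn each component of $S_0(f)$ into an arc with a matching pair of cusps; cusp merges along joining curves (using connectedness of $M$) consolidate all of $S_0$ into a single circle with null-homotopic image; and a calculus of ``disk moves'' (which generate all regular homotopies of immersed curves) is realized by homotopies of the map, at the cost of creating some indefinite fold circles, to make the definite fold image embedded. A purely local, component-by-component cancellation scheme of the kind you describe cannot be correct in principle: if it were, it would work for any target surface $N$, contradicting \cite[Proposition~2.11]{Saeki06}, which shows that for general $N$ the finiteness of $[\pi_1(N):f_*\pi_1(M)]$ obstructs the elimination of definite folds. So the obstruction is not, as you suggest, merely a matter of ``orientation issues'' or ``enough room'' — it is a $\pi_1$ condition that your local lemma would silently violate.

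Two smaller points. First, your termination argument (``each move strictly decreases the number of components of $S_0$ and creates no new definite folds'') does not match what actually happens: the intermediate moves create new \emph{indefinite} fold circles and new cusp pairs, and the count of definite fold components is controlled by first \emph{merging} them all into one circle rather than killing them one by one. Second, the ``birth of a cancelling indefinite/definite pair to push against'' is not quite the mechanism used; the cusps needed for the cancellation are created by swallow-tail moves (flips) on the definite fold itself, and the indefinite fold that absorbs it comes from the flips together with Reidemeister~II moves on the fold images, whose realizability requires an index argument. Your instinct that the delicate part is writing down honest homotopies realizing these local models is correct, but those models are already available in the literature (\cite{Levine0, Saeki3, GK}); the genuinely new work in this proof is the global bookkeeping that puts the definite fold into the standard position where those models apply.
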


\begin{proof}
We may assume that $M$ is connected and that
$f$ is stable.
By the swallow-tail move (or flip) as described in \cite[Lemma~3.3]{Saeki3}
applied to $S_0(f)$,
we see that $f$ is homotopic to a stable map
$f_1$ such that each connected component of $S_0(f_1)$ is
an open arc. Note that the two cusps appearing
at the ends of each component constitute a matching pair
in the sense of \cite{Levine0, Saeki3}.
Then, since $M$ is connected, we can connect the components
by appropriate joining curves, and by the beak-to-beak move 
(or cusp merge) as described in \cite{Levine0} or
\cite[Lemma~3.7]{Saeki3}, we see that $f_1$ is homotopic
to a stable map $f_2$ such that
$S_0(f_2)$ is a connected open arc. Then, again by using the same
move with a joining curve connecting the pair of cusps
in its ends, we see that $f_2$ is homotopic to
a stable map $f_3$ such that $S_0(f_3)$ is a circle.
Note that, in this last move, we can choose the joining
curve parallel to $S_0(f_2)$ so that $f_3|_{S_0(f_3)}$
is null-homotopic.
Furthermore, by applying the move described in
Remark~\ref{rmk:tau} below if necessary, we may assume that
the immersion
$f_3|_{S_0(f_3)}$ is regularly homotopic to an embedding.

Let $c$ be an immersed curve in the surface $N$ with normal
crossings. Let us consider
a $2$--disk $D$ with two corner points embedded in $N$
such that 
$c \cap \partial D$ consists of an embedded arc, say $\alpha$, in $c$,
which coincides with one of the two smooth boundary curves
of $D$, possibly together with some transverse intersection points
of $c$ and $\beta$, where $\beta$ is the closure of $\partial D \setminus \alpha$.
We assume that the two corner points of $\partial D$
are not double points of $c$ and that $\beta$
does not contain a double point of $c$.
Then the \emph{disk move} with respect to $D$ transforms $c$ to 
the closure of
$$(c \setminus \alpha) \cup \beta$$
with the corners smoothed (see Figure~\ref{fig11}).
It is clear that the resulting immersed curve $c'$ with normal crossings
is regularly homotopic to $c$.
Furthermore, it is not difficult to show that any
regular homotopy is realized by a finite iteration of
disk moves. (In fact, Reidemeister type moves II and III are
so realized, and also any isotopy is also so realized.)

\begin{figure}[htbp]
\begin{center}
\psfrag{a}{$\alpha$}
\psfrag{b}{$\beta$}
\psfrag{c}{$c$}
\psfrag{c1}{$c'$}
\psfrag{D}{$D$}
\includegraphics[width=0.9\linewidth,height=0.4\textheight,
keepaspectratio]{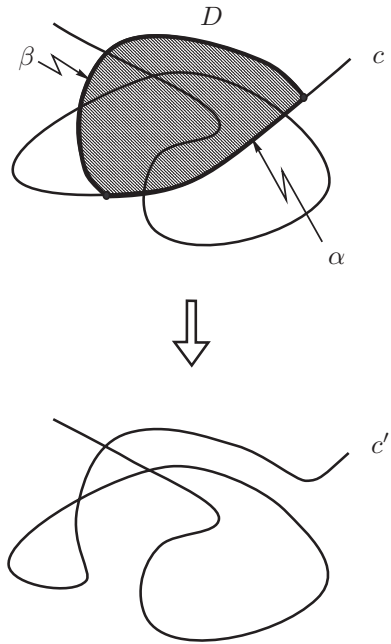}
\end{center}
\caption{Disk move. The curve $\alpha$ is replaced by the
curve $\beta$ up to slight smoothing.}
\label{fig11}
\end{figure}

Now, let us consider our situation: $c = f_3(S_0(f_3))$ 
is the image of the definite
fold curve immersed in $S^2$ (or $\R P^2$). If a small collar neighborhood
of $\alpha$ in $D$ is not contained in the image of a
small tubular neighborhood of the definite fold in $M$, then it is easy
to realize the disk move by a homotopy of the excellent map, even
under the presence of indefinite fold image.
Otherwise, we can use the method described in 
\cite[Case~2, p.~375]{Saeki06} in order to realize the disk move, at the
cost of creating two small embedded circles of indefinite fold image
(see the procedures described by Figures~6--8 of \cite{Saeki06}).
Repeating this procedure finitely many times, we get a stable map $f_4$ homotopic to $f_3$
such that $f_4|_{S_0(f_4)}$ is an embedding.

Moreover, we may assume that $f_4(S_0(f_4))$ bounds
a $2$--disk $\Delta$ in $N$ in such a way that
the image of a small tubular neighborhood of $S_0(f_4)$
in $M$ is disjoint from $\Int{\Delta}$.
This is possible, since $f_3|_{S_0(f_3)}$ and such a curve
are homotopic to each other.
Then, by further modifying $f_4$, we may assume that
$N(\Delta) \cap f_4(S(f_4)) = f_4(S_0(f_4))$,
where $N(\Delta)$
is a small neighborhood of $\Delta$ in $N$.

Finally, we can use a set of moves
as explained in \cite[Fig.~7]{GK0}: we apply two flips, which create
two pairs of cusp points, a Reidemeister II type move applied to 
the definite fold image that is realized by a disk move, 
a Reidemeister II type move applied to the indefinite fold image,
which is seen to be realizable by an index argument,
and then unflips (see \cite[Lemma~4.8]{GK}), which eliminate cusps
and definite fold points.

This completes the proof.
\end{proof}

\begin{rmk}
In \cite{Saeki06}, regular homotopy
was decomposed into Reidemeister type moves II and III, and 
each such move was realized by a homotopy of maps.
However, 
strictly speaking, it is not enough:
one needs to use isotopies as well. Usually, this causes
no problem: however, in our situation, it does, since
the image of the definite fold component
may intersect with the image of the indefinite fold
components. Unfortunately, this was not thoroughly explained in \cite{Saeki06}.
\end{rmk}

\begin{rmk}\label{rmk:tau}
In the course of the proof in \cite{Saeki06}, we have used the connected sum
operations with the map $\tau : S^n \to \R^2$, constructed 
in \cite[Example~2.2]{Saeki06}. 
In fact, this is also realized as a composition of the homotopy moves as
depicted in Figure~\ref{fig12}. 
In the figure, the dotted curve represents the image of the
joining curve used for the cusp merge, and the image
of the definite (or indefinite) fold is depicted by thick 
(resp.\ thin) curves.
\end{rmk}

\begin{figure}[htbp]
\begin{center}
\psfrag{0}{$0$}
\psfrag{1}{$1$}
\psfrag{f}{Flip}
\psfrag{c}{Cusp Merge}
\includegraphics[width=\linewidth,height=0.4\textheight,
keepaspectratio]{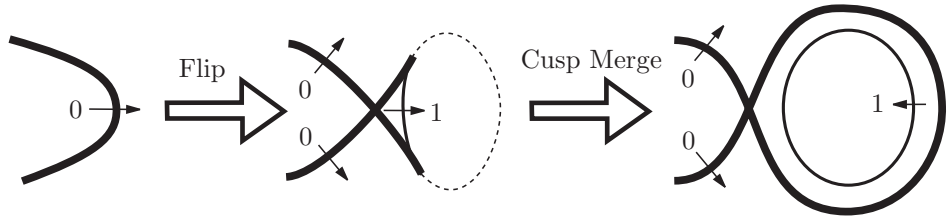}
\end{center}
\caption{Moves that realize the connected sum with $\tau$. 
In this and the following figures, thick lines depict definite fold images,
while thin lines depict indefinite fold images. The labels attached
to small arrows indicate the index in the designated direction:
e.g.\ label $0$ corresponds to a definite fold image and
the fiber over the region the arrow points into has one
additional $(n-2)$--sphere component when compared with the fiber
over the region the arrow starts from.}
\label{fig12}
\end{figure}

\begin{rmk}
Note that our new proof has the advantage that
the modifications are performed in the same homotopy class.
Furthermore, the procedures can be realized algorithmically:
we start with the procedures for making
the definite fold a circle, then use disk moves 
and the moves described in Remark~\ref{rmk:tau} to get
embedded definite fold image, and finally use the
procedure described in \cite[Fig.~7]{GK0}.

Note that such an algorithmic construction is an important ingredient
in \cite{BS, BS2} for constructing simplified broken
Lefschetz fibrations and simplified trisections on
$4$--dimensional manifolds. 
\end{rmk}

\section{Examples}\label{section3}

As has been pointed out, our new proof is constructive,
which enables us to give explicit examples as follows.
Let us start with examples in dimension three.
 
\begin{exam}\label{connected}
Let us consider the positive Hopf fibration $S^3 \to S^2$.
This is a non-singular map and hence is a stable map. By applying the
birth (see \cite[Lemma~3.1 and Rmark~3.2]{Saeki3})
and then a cusp merge as depicted in Figure~\ref{fig14} (1), we get
a stable map with one definite fold circle whose image is embedded. Then,
we can apply the moves as in \cite[Fig.~7]{GK0} to get a stable map
without definite fold. Note that the resulting stable map
has two indefinite fold circles that
are disjointly embedded into $S^2$.

For a positive integer $n$, by taking the connected sum of 
$n$ copies of the above stable map, we get a stable map
$S^3 \to S^2$ representing $n \in \Z \cong \pi_3(S^2)$ as follows.
For $n=2$, let $f_1$ and $f_2 : S^3 \to S^2$ be
stable maps each of which has one definite fold circle whose image is embedded
as in the right picture of Figure~\ref{fig14} (1). By
composing diffeomorphisms isotopic to the identity to $f_1$ and $f_2$
if necessary, we may assume that $f_1(S(f_1))$ (resp.\ $f_2(S(f_2))$) lies
in the northern (resp.\ southern) hemisphere of $S^2$
in such a way that they form circles parallel to the equator
of $S^2$ and that the images of the definite fold circles are
both adjacent to the equator.
Let $\Delta_1$ (resp.\ $\Delta_2$)
be the $2$--disk bounded by $f_1(S_0(f_1))$ (resp.\  $f_2(S_0(f_2))$)
lying in the northern (resp.\ southern) hemisphere. Then, 
$\Delta_j$ contains the image of a small regular neighborhood
of the definite fold, $j = 1, 2$, and we may assume that
$f_1(p_1)$ and $f_2(p_2)$
are close to each other for some definite fold points $p_j \in S_0(f_j)$, $j = 1, 2$.
Let $D_j$ be a small closed $2$--disk neighborhood of $f_j(p_j)$
in $S^2$, $j = 1, 2$,
such that $D_j \cap f_j(S(f_j))$ is diffeomorphic to a line segment consisting
of definite fold images and that $\partial D_j$ intersects $f_j(S(f_j))$
transversely. Then, $f_j^{-1}(D_j)$ contains a connected
component $B_j$ diffeomorphic to the $3$--ball that contains $p_j$ in its
interior. Now, we can construct a stable map $f_1 \sharp f_2$
into $S^2$ by gluing $f_j$ restricted to $S^3 \setminus \Int{B_j}$, $j = 1, 2$,
in an appropriate way.
(As to the gluing procedure, refer to
\cite{Saeki2}. See also Figure~\ref{fig14} (2).) 
For a general positive integer $n$, we repeat the same procedure.
Note that in the gluing process, we need to make sure that
the orientations of the source manifolds are consistent so that
we get a map representing $n \in \Z \cong \pi_3(S^2)$.

\begin{figure}[htbp]
\centering
\psfrag{m}{Cusp Merge}
\psfrag{c}{Connected Sum}
\psfrag{a}{(1)}
\psfrag{2}{(2)}
\psfrag{0}{$0$}
\psfrag{1}{$1$}
\includegraphics[width=\linewidth,height=0.6\textheight,
keepaspectratio]{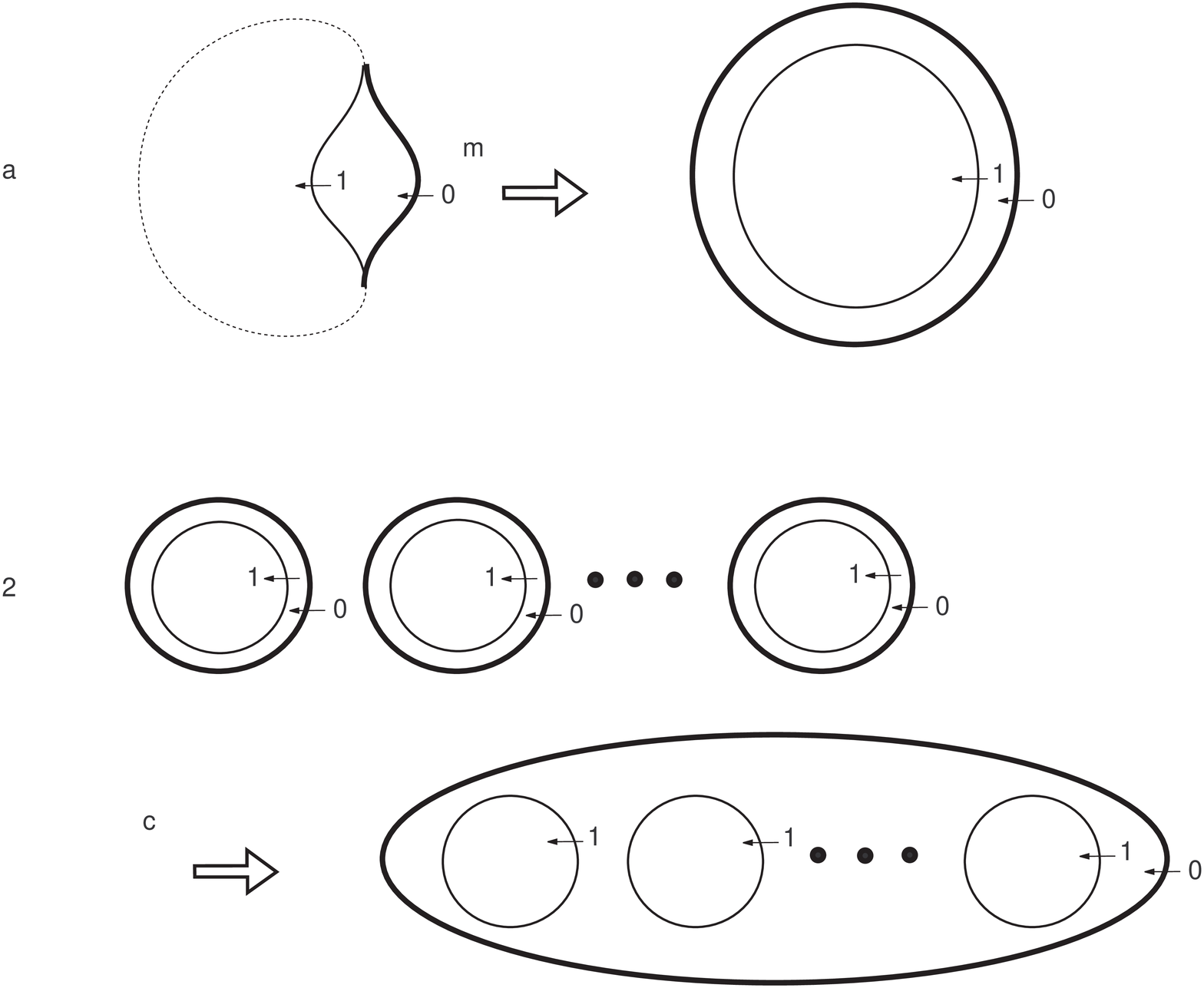}
\caption{(1) A stable map $S^3 \to S^2$ homotopic to the positive Hopf fibration.
(2) A stable map $S^3 \to S^2$ corresponding to a positive
integer $n \in \Z \cong \pi_3(S^2)$, where
we take the connected sum of $n$ copies of the stable map homotopic to the
positive Hopf fibration. Thick (or thin)
curves indicate the image of definite (resp.\ indefinite)
fold points.}
\label{fig14}
\end{figure}

Then, by applying the moves as in \cite[Fig.~7]{GK0},
we get a stable map
without definite fold. Note that the resulting stable map
has $n+1$ indefinite fold circles that are disjointly embedded.

For non-positive integers $n$, we can also use
stable maps that are homotopic to the negative Hopf fibration.
In this way, for every homotopy class $S^3 \to S^2$, we get
an explicit stable map without definite fold.

Note that in the gluing process above, if we use $k$ positive Hopf
fibrations and $\ell$ negative ones for some non-negative integers
$k$ and $\ell$ with $k + \ell > 0$, then their connected sum yields a stable map
without definite fold such that it has $k+\ell+1$ indefinite fold circles
that are disjointly embedded into $S^2$ and which represents
$k-\ell \in \Z \cong \pi_3(S^2)$. This observation shows that
there exist pairs of stable maps $S^3 \to S^2$ without definite fold which have exactly
the same indefinite fold images but which are not homotopic to each other.
\end{exam}

\begin{prob}\label{problem}
For an integer $n \in  \Z \cong \pi_3(S^2)$, let us
consider stable maps $f : S^3 \to S^2$ without definite fold which represent
the associated homotopy class and which satisfies that $S(f) \neq \emptyset$ and
$f|_{S(f)}$ is an embedding. Then, is the number of components of
$S(f)$ congruent modulo two to $n+1$? Furthermore, is the
minimum number of components of $S(f)$ over all such $f$
equal to $|n|+1$?\footnote{This problem
is originally due to Masamichi Takase.}
\end{prob}

Now, let us consider the case of dimension four.

\begin{exam}\label{ex2.6}
Some examples of stable maps without definite fold points
are given in \cite[\S8.2]{ADK}. The map $S^4 \to S^2$ given
there corresponds, in fact, to the one obtained
in our simple proof starting from the standard stable map
$S^4 \to \R^2 \to S^2$ with only definite fold points as its singularities,
where the first map is the standard projection $\R^5 \to \R^2$
restricted to the unit $4$--sphere and the second map is an embedding.
\end{exam}

\begin{exam}
Matsumoto \cite{Matsumoto} constructed
a Lefschetz fibration $S^4 \to S^2$ of genus $1$ with a single ``twin'' singular fiber,
as the composition $h \circ \Sigma h$, where $\Sigma h : S^4 = \Sigma S^3 \to \Sigma S^2 = S^3$
is the suspension of the Hopf fibration $h: S^3 \to S^2$.
As is well-known, this represents the non-trivial element of $\pi_4(S^2) \cong \Z_2$.
Note that the twin singular fiber has two Lefschetz critical points.
It is known that, by deforming the fibration slightly, the twin singular fiber
splits into two singular fibers (of type $I_1^\pm$ in the notation in \cite{Matsumoto2}).
By the wrinkling moves (see, for example, \cite{L}), we can
transform the two Lefschetz critical points to two circles
of indefinite fold and cusp points each of which contains exactly three cusp points.
Then, by applying the cusp merges three times, we get
three circles of indefinite fold embedded by the projection to $S^2$, 
where the image of one circle
component encircles the other two. The result is an example
of a stable map without definite fold whose homotopy class
represents the non-trivial element of $\pi_4(S^2)$.
Note that the stable map $S^4 \to S^2$ constructed
in Example~\ref{ex2.6} represents the neutral element of
$\pi_4(S^2)$.
\end{exam}

\begin{exam}
As has been shown in \cite{Saeki2}, for every homotopy $n$--sphere
$\Sigma^n$ with $n \geq 5$, which may possibly be an exotic
$n$--sphere \cite{KM},
there exists a stable map $g : \Sigma^n \to \R^2$
with only definite fold as its singularities. In fact, the image is
diffeomorphic to the $2$--disk. By embedding the $2$--disk into $S^2$,
and by applying the procedure as depicted in \cite[Fig.~7]{GK0}, we
get a stable map $f : \Sigma^n \to S^2$ with only one circle of indefinite fold
points.
\end{exam}

In the above examples, the map $f$ restricted
to $S(f)$ is an embedding. If we impose this condition
for stable maps without definite fold, then we do not
know if the number of components of $S(f) = S_1(f)$
is minimal in the given homotopy class, unless it
is connected (see also Problem~\ref{problem}). 
Note that if we allow $f|_{S(f)}$ to
have self-intersections, then we can always
arrange so that $S(f_1) = S_1(f_1)$ is connected
by using the techniques developed in \cite{Saeki3}.

\begin{rmk}
As has been shown in \cite[Proposition~2.11]{Saeki06}, for maps $f : M \to N$
into a general surface $N$, a similar result does not hold. In fact,
the finiteness of the index $[\pi_1(N); f_*\pi_1(M)]$ is a necessary
condition for the existence of a stable map without definite fold
homotopic to $f$. Later, Gay and Kirby \cite{GK} showed that this
is, in fact, sufficient, answering to the author's question
posed in \cite{Saeki06}.
\end{rmk}

\begin{rmk}
After \cite[Theorem~2.6]{Saeki06} (or Theorem~\ref{thm}
of the present paper) was proved, it was used for proving
the existence of a broken Lefschetz fibration
on an arbitrary closed orientable $4$--manifold \cite{Baykur}.
(In \cite{Baykur} Baykur also gave a topological proof of the existence of
broken Lefschetz pencils on near-symplectic $4$--manifolds, using the theorem.)
Furthermore, Gay and Kirby \cite{GK} extensively generalized
the theorem in more general settings.
For example, they showed that for a closed connected
manifold of dimension $n > 2$ and a closed connected
surface $N$, a continuous map $f : M
\to N$ is homotopic to a stable
map without definite fold points if and only if
the index $[\pi_1(N): f_*\pi_1(M)]$ is finite.
\end{rmk}

\section{Simple stable maps without definite fold}\label{section4}

In \cite{Saeki4}, simple stable maps of closed orientable
$3$--manifolds into $\R^2$ or $S^2$ were studied. In this section,
combining the techniques developed there with those in this paper,
we study simple stable maps without definite fold. 
Recall that a stable map is \emph{simple}
if it does not have cusp points
and every component of the inverse image of a point in the target always contains
at most one singular point. For example, if a stable map
$f$ satisfies that $f|_{S(f)}$ is an embedding, then $f$ is necessarily simple.

We first prove the following.

\begin{prop}\label{prop1}
Let $M$ be a closed orientable $3$--manifold. Then, the
following conditions are equivalent.
\begin{itemize}
\item[(i)] $M$ is a graph manifold, i.e.\ it is the union of
finitely many $S^1$--bundles over compact surfaces attached along their
torus boundaries.
\item[(ii)] There exists a stable map $f : M \to S^2$ without definite fold
such that $f|_{S(f)}$ is an embedding.
\item[(iii)] There exists a simple stable map $f : M \to S^2$ without definite
fold.
\end{itemize}
\end{prop}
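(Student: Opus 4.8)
The plan is to prove the three equivalences by a cycle of implications, with the genuinely topological content concentrated in (ii)$\Rightarrow$(i) and the constructive content in (i)$\Rightarrow$(ii) and (ii)$\Leftrightarrow$(iii). For (ii)$\Rightarrow$(iii), observe that if $f|_{S(f)}$ is an embedding then in particular no fiber contains more than one singular point, and a stable map $M^3\to S^2$ whose singular image is embedded automatically has no cusp points (a cusp point of a stable map of a $3$--manifold forces the singular image to have a semicubical cusp, which is not embedded as a smooth submanifold); hence $f$ is simple. The implication (iii)$\Rightarrow$(ii) uses Theorem~\ref{thm} together with the techniques of \cite{Saeki4}: a simple stable map without definite fold of an orientable $3$--manifold can be modified, keeping it simple and definite-fold-free, so that its indefinite fold image becomes embedded, by applying the disk moves of \S\ref{section2} (and an index argument to see the required Reidemeister II moves on the indefinite image are realizable) to remove the double points of $f|_{S_1(f)}$ one at a time; I would quote \cite{Saeki4} for the fact that simplicity is preserved under the local models used there.

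For (i)$\Rightarrow$(ii), the idea is to build $f$ piece by piece from a Heegaard-type or graph-manifold decomposition. Each $S^1$--bundle over a compact surface $F$ with boundary admits a simple stable map into a disk whose singular set is a disjoint union of circles mapped to parallel concentric circles (take a Morse function on $F$ with no definite critical points after handle-cancellation, cross with $S^1$, and project); gluing these along torus boundaries, matched up so the pieces fit as in \cite{Saeki2, Saeki4}, produces a simple stable map $M\to S^2$, and one then checks that by general position and the cusp-merge/disk-move toolkit of \S\ref{section2} the indefinite fold image can be made embedded while staying definite-fold-free. Alternatively, and more in the spirit of the paper, I would start from \emph{any} stable map $M\to S^2$, use Theorem~\ref{thm} to kill the definite folds, and then invoke \cite{Saeki4} to conclude that the resulting map — or a simplification of it — has embedded singular image \emph{precisely because} $M$ is a graph manifold; this is where one must import the structural results of \cite{Saeki4} relating the fibers of simple stable maps of $3$--manifolds to Seifert-fibered and graph structures.

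The main obstacle is (ii)$\Rightarrow$(i): here one must show that the existence of a stable map $f:M\to S^2$ with no definite fold and embedded singular image \emph{forces} $M$ to be a graph manifold. The strategy is the standard one for simple stable maps: the singular image $f(S(f))$ is an embedded $1$--complex (in fact a disjoint union of circles, since $S_0(f)=\emptyset$ and $f|_{S(f)}$ is an embedding) that divides $S^2$ into regions, over each of which $f$ is a fiber bundle with fiber a disjoint union of circles (a closed orientable surface fibered over a region of $S^2$ would have to be a union of tori/circles times intervals — one shows the regular fiber is a disjoint union of circles because there is no definite fold so no $S^0$ or $S^2$ fibers appear and Euler-characteristic/orientability constraints do the rest), so each $f^{-1}(\text{region})$ is a Seifert-fibered piece (an $S^1$--bundle over the region with possibly some exceptional behavior coming from indefinite folds), and across each indefinite fold circle the pieces are glued along torus boundaries via the standard "indefinite fold attaching" local model. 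Assembling these pieces exhibits $M$ as a union of $S^1$--bundles over compact surfaces glued along tori, i.e.\ a graph manifold; the delicate points are (a) verifying the regular fiber is always $1$--dimensional closed, which uses orientability of $M$ and absence of definite folds, and (b) checking that the gluing maps along the torus boundaries are of the required type, for which I would again lean on the local analysis of fibers near indefinite folds carried out in \cite{Saeki4}.
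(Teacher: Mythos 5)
Your cycle of implications is arranged differently from the paper's, and the arrangement matters: the paper proves (i)$\Rightarrow$(ii)$\Rightarrow$(iii)$\Rightarrow$(i), citing \cite{Saeki4} outright for (iii)$\Rightarrow$(i) (the existence of a simple stable map of a closed orientable $3$--manifold into $S^2$ forces it to be a graph manifold) and for the starting point of (i)$\Rightarrow$(ii) (a graph manifold admits a stable map into $S^2$ without cusps whose restriction to the singular set is an embedding, which is then freed of its definite fold circles one at a time by the final move in the proof of Theorem~\ref{thm}, a move that preserves embeddedness of the singular image). Your (ii)$\Rightarrow$(iii) agrees with the paper (this is exactly the remark preceding the proposition), and your (ii)$\Rightarrow$(i) is essentially a re-derivation of the \cite{Saeki4} result that the paper simply quotes; that sketch is workable in outline, though your parenthetical about ``a closed orientable surface fibered over a region'' is confused --- regular fibers of a map $M^3\to S^2$ are automatically disjoint unions of circles, and the real content is the analysis of the singular fibers and the torus gluings, which is precisely what \cite{Saeki4} supplies and what simplicity is needed for.

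The genuine gap is your direct proof of (iii)$\Rightarrow$(ii). You propose to remove the double points of $f|_{S_1(f)}$ ``one at a time'' by disk moves. Disk moves only realize \emph{regular homotopies} of the singular image, and a regular homotopy cannot remove double points one at a time: the number of double points of each component is constrained modulo $2$ by the Whitney index, and eliminating a single crossing is a Reidemeister I type move, which is not a regular homotopy and is not among the moves realized in \S\ref{section2}. Even for \emph{definite} fold circles the paper needs the extra move of Remark~\ref{rmk:tau} just to adjust the regular homotopy class, at the cost of creating new indefinite fold circles; no analogous tool is available to you for indefinite fold circles, and the index condition needed to realize even the Reidemeister II moves on indefinite fold image need not hold for an arbitrary simple stable map. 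Since your loop (iii)$\Rightarrow$(ii) is load-bearing in your architecture (you never claim (iii)$\Rightarrow$(i) directly), this gap breaks the proof. The repair is exactly the paper's route: replace (iii)$\Rightarrow$(ii) by (iii)$\Rightarrow$(i), quoted from \cite{Saeki4}, and let (i)$\Rightarrow$(ii) close the cycle.
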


\begin{proof}
If $M$ is a graph manifold, then by \cite{Saeki4}, there exists
a stable map $g : M \to S^2$ without cusps such that $g|_{S(g)}$ is an embedding.
Then, by using the method described in the last step of proof of Theorem~\ref{thm},
we can replace each definite fold circle with an indefinite fold circle by homotopy.
Furthermore, we can arrange so that the restriction to the singular point
set of the resulting map is still an embedding.
Thus, (i) implies (ii). If (ii) is satisfied, then the same stable map satisfies the
condition in (iii). If (iii) is satisfied, then (i) holds, as is proved in \cite{Saeki4}.
This completes the proof.
\end{proof}

Let $M$ be a closed oriented graph $3$--manifold.
Yano \cite{Yano} introduced a subgroup $G(M)$ of $H_1(M; \Z)$ and
proved that a homology class $\alpha$ of $H_1(M; \Z)$ can be represented
by a graph link if and only if $\alpha \in G(M)$. Here, a link $L$
in $M$ is a \emph{graph link} if its exterior $M \setminus
\Int{N(L)}$ is a graph manifold, where $N(L)$ is a tubular neighborhood of $L$ in $M$.
On the other hand, let $[M, S^2]$ be the set of homotopy classes
of continuous maps $M \to S^2$, and let 
$\mathrm{deg} : [M, S^2] \to H_1(M; \Z)$
be the map which associates to an element represented by a smooth
map $M \to S^2$ the homology class represented by a regular fiber
(see \cite{CRS}). Here, we fix an orientation of $S^2$, and
each regular fiber is oriented in accordance with the orientations
of $M$ and $S^2$. Note that it is known that $\mathrm{deg}$
is a well-defined surjective map, which can be seen by the Pontrjagin--Thom construction.
Then, for graph $3$--manifolds, we have the following.

\begin{prop}\label{graph2}
Let $M$ be a closed oriented graph $3$--manifold.
A continuous map $g : M \to S^2$ is homotopic to a stable
map $f : M \to S^2$ without definite fold 
such that $f|_{S(f)}$ is an embedding
if and only if the homotopy class of $g$ lies in $\mathrm{deg}^{-1}(G(M))$.
\end{prop}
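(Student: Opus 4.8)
The plan is to prove both implications of Proposition~\ref{graph2} by relating the regular fiber of a stable map without definite fold to a graph link, and conversely.

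\textbf{Necessity.} Suppose $g$ is homotopic to a stable map $f : M \to S^2$ without definite fold with $f|_{S(f)}$ an embedding. First I would pick a regular value $q \in S^2$ avoiding $f(S(f))$, so that $F = f^{-1}(q)$ is a (possibly empty) closed oriented $1$--manifold, i.e.\ a link in $M$, representing $\mathrm{deg}(g) \in H_1(M;\Z)$ by definition of $\mathrm{deg}$. The key point is that $F$ is a graph link: its exterior $M \setminus \Int{N(F)}$ admits a stable map to a surface without definite fold, hence fibers in the requisite way. More concretely, after choosing a small disk $D \ni q$ containing no singular values, $f^{-1}(S^2 \setminus \Int{D})$ is the exterior of $F$ (up to the product collar), and the restriction of $f$ to it is a stable map into $S^2 \setminus \Int{D} \cong D^2$ without definite fold whose restriction to the singular set is still an embedding; in particular it is a simple stable map. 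By Proposition~\ref{prop1} (applied to the closed double, or by invoking the relative version from \cite{Saeki4}) the exterior of $F$ is a graph manifold, so $F$ is a graph link. Yano's theorem \cite{Yano} then gives $[F] \in G(M)$, i.e.\ $\mathrm{deg}(g) \in G(M)$, which is what we wanted.

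\textbf{Sufficiency.} Conversely, suppose $\mathrm{deg}(g) = \alpha \in G(M)$. By Yano's theorem \cite{Yano} there is a graph link $L$ in $M$ with $[L] = \alpha$. The strategy is to build a stable map realizing $L$ as a regular fiber. Since $M \setminus \Int{N(L)}$ is a graph manifold (with torus boundary), I would first produce a stable map without definite fold and with embedded singular image on a capped-off closed graph manifold, using Proposition~\ref{prop1}, then excise; more carefully one wants a version of the result of \cite{Saeki4} for graph manifolds with torus boundary mapping to a disk, matching the obvious non-singular fibration $N(L) \cong L \times D^2 \to D^2$ near the boundary, and glue the two pieces along $\partial N(L)$ to get a stable map $M \to S^2$ without definite fold, with $f|_{S(f)}$ an embedding, whose regular fiber over the center of the target disk is exactly $L$. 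This map has $\mathrm{deg} = \alpha = \mathrm{deg}(g)$. Finally, since $\mathrm{deg}$ need not be injective, I would adjust the map within its $\mathrm{deg}$--preimage to be actually homotopic to $g$: by the Pontrjagin--Thom description of $[M,S^2]$, two maps with the same regular fiber class differ by an element detected by a secondary (framing) invariant valued in a quotient of $\Z$ (coming from $\pi_3(S^2)$), and this can be killed by taking a fiber connected sum with a stable map $S^3 \to S^2$ realizing the appropriate element of $\pi_3(S^2)$ without definite fold, as constructed in Example~\ref{connected}. This last connected-sum step keeps us in the class of stable maps without definite fold and with embedded singular image (the summand's singular image is a disjoint union of embedded circles, placed in a small disk away from the rest), while moving the homotopy class through all of $\mathrm{deg}^{-1}(\alpha)$ until we hit $[g]$.

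\textbf{Main obstacle.} The delicate point is the gluing step in the sufficiency direction: one needs a stable map on the graph manifold $M \setminus \Int{N(L)}$ that is without definite fold, has embedded singular image, \emph{and} is a trivial fibration (no singularities) near each torus boundary component, so that it matches the product fibration on $N(L)$; ensuring the techniques of \cite{Saeki4} deliver this boundary behaviour — essentially a relative version of Proposition~\ref{prop1} — is where the real work lies. A secondary subtlety is verifying that the fiber connected sum with the Example~\ref{connected} maps indeed surjects onto $\mathrm{deg}^{-1}(\alpha)$; this follows from the Pontrjagin--Thom analysis of $[M,S^2]$ together with the fact that the Hopf-type summands realize all of $\pi_3(S^2) \cong \Z$, but it must be checked that the connected-sum operation is compatible with the relative framing invariant.
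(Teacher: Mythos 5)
Your overall skeleton coincides with the paper's: necessity via the fact (from \cite{Saeki4}) that regular fibers of such maps are graph links together with Yano's theorem, and sufficiency via realizing a Yano graph link $L$ as a fiber of a map without definite fold and then correcting the homotopy class inside $\mathrm{deg}^{-1}(\alpha)$ by connected sums with Hopf fibrations as in \cite{CRS} and Example~\ref{connected}. The difference is in the middle of the sufficiency argument, and it is exactly where you yourself locate the ``real work.'' You propose to build the map relatively on the exterior $M \setminus \Int{N(L)}$, matching a trivial fibration near the boundary tori, and glue; this requires a relative version of the \cite{Saeki4} construction (no definite fold, embedded singular image, prescribed product behaviour near each boundary torus) that you do not prove, and it is not an off-the-shelf statement. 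The paper avoids this entirely: it takes a decomposition of $M$ into $S^1$--bundle pieces in which the components of $L$ are fibers, runs the global construction of \cite[\S 4]{Saeki4} to get a map $f_1$ on all of $M$ with embedded singular image whose fiber over a point $x$ merely \emph{contains} $L$, removes the definite fold circles one by one (each is embedded, so the last step of the proof of Theorem~\ref{thm} applies), and then surgers away the extra components of $f^{-1}(x)$ as in \cite[Lemma~3.6]{Saeki4} --- re-eliminating the definite fold created by the surgery and checking that the new fiber components form a trivial link, so the fiber class is unchanged. So the paper trades your relative-boundary-condition lemma for a global construction plus a fiber-surgery and bookkeeping argument. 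As written, your proposal has a genuine gap at the gluing step; to make it a complete proof you would either have to establish the relative extension result or switch to the paper's surgery route. (Your necessity argument via doubling the fiber exterior also takes a detour where a direct citation of \cite{Saeki4} suffices, but that is a matter of economy rather than correctness.)
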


In particular, if the Jaco--Shalen--Johannson complex of $M$
is a tree (see \cite{Yano}), then every continuous map
$M \to S^2$ is homotopic to a stable
map $f : M \to S^2$ without definite fold such that $f|_{S(f)}$ is an embedding.
For example, if $H_1(M; \Z)$ is finite, then this always holds.

\begin{proof}[Proof of Proposition~\textup{\ref{graph2}}]
Let $f : M \to S^2$ be a stable map without definite fold
such that $f|_{S(f)}$ is an embedding. Then, by \cite{Saeki4},
every regular fiber is a graph link. Hence, the homotopy class
of $f$ must necessarily lie in $\mathrm{deg}^{-1}(G(M))$.

Conversely, let $g : M \to S^2$ be a continuous map
whose homotopy class lies in $\mathrm{deg}^{-1}(G(M))$.
By \cite{Yano},
there exists an oriented graph link $L$ in $M$ that represents a
homology class associated with the homotopy class of $g$. 
Then, there is a decomposition of $M$ into a finite number of $S^1$--bundles
over compact surfaces attached along their
torus boundaries such that each component of $L$ is an $S^1$--fiber
of some bundle piece. Then, we mostly follow the procedures
described in \cite[\S 4]{Saeki4} as follows. We first decompose the $S^1$--bundle
pieces in such a way that each base surface has genus $0$.
Then, we embed each base surface into $S^2$. At this stage,
in \cite{Saeki4}, we embedded the surfaces so that their
images were disjoint. However, in our present situation, we
embed the base surfaces so that the union of all the boundary
curves are embedded, that the base points over which lie
a component of $L$ are mapped to the same point, say $x \in S^2$, and
that no other points are mapped to $x$.
Furthermore, we arrange the embeddings so that the $S^1$--fibers
have the correct orientations. Then, we follow the procedures
as described in \cite[\S 4]{Saeki4} to get a stable map $f_1 : M \to S^2$
such that $f_1|_{S(f_1)}$ is an embedding and that $(f_1)^{-1}(x)$ contains
$L$ as an oriented link.

Note that $f_1$ may have definite fold points. However, each definite fold
circle is embedded: therefore, by the final procedure as described
in the proof of Theorem~\ref{thm}, we can replace each definite fold
component with an indefinite fold component, one by one by homotopy. 
Thus, we get a stable map
$f_2 : M \to S^2$ without definite fold such that 
$f_2|_{S(f_2)}$ is an embedding and that $(f_2)^{-1}(x)$ contains
$L$ as an oriented link.

Then, by performing certain surgery operations to $f_2$
on a neighborhood of $(f_2)^{-1}(x) \setminus L$
as described in \cite[Proof of Lemma~3.6]{Saeki4},
we get a stable map $f_3 : M \to S^2$ such that $(f_3)^{-1}(x)$
coincides with $L$. However, during the surgery operation,
we need to create definite fold and $S_0(f_3)$ is not empty in
general. As each component of $S_0(f_3)$ is embedded by $f_3$,
we can eliminate each such component by homotopy
to get a stable map $f_4 : M \to S^2$ without definite fold.
After each such homotopy, we can observe that a circle
component is added to the inverse image of the point $x$.
However, each such component is a fiber situated
near a definite fold, and we see that the union of all
such components constitute a trivial link. In particular,
$(f_4)^{-1}(x)$ is an oriented link which is $\Z$--homologous
to $L$.

Now, the homotopy classes of $f_4$ and $g$ have the same
image by $\mathrm{deg}$. Then, according to \cite{CRS},
we see that by taking the connected sum of $f_4$ with
finitely many positive (or negative) Hopf fibration maps $S^3 \to S^2$,
we can arrange so that the resulting map is homotopic to $g$.
On the other hand, such connected sum operations can be performed
as described in Example~\ref{connected}: we first create
a definite fold circle for each of the maps, we take connected sum
along definite fold circles, and then we eliminate the definite fold.
In this way, we get a stable map $f_5 : M \to S^2$ without
definite fold homotopic to $g$ such that $f_5|_{S(f_5)}$
is an embedding.
This completes the proof.
\end{proof}

\section{Non-existence of singular Legendre fibrations}\label{section5}

Let $M$ be a closed orientable $3$--manifold endowed with a contact structure
and $f : M \to N$ a stable map
of $M$ into a surface $N$ without definite fold nor cusp points.
Note that then, for every $y \in N$, the fiber $f^{-1}(y)$
is a union of immersed circles in $M$.
Such a map $f$ is called a
\emph{singular Legendre fibration}
if each fiber $f^{-1}(y)$, $y \in N$,
is a union of Legendre curves: i.e.\ if each
fiber is tangent to the plane field 
(or the $2$--dimensional distribution)
given by the contact structure.
In \cite{Saeki06} we proposed the following problem, which
was originally due to Goo Ishikawa:

\begin{prob}\label{prob3.4}
Determine those $C^\infty$ stable
maps $f : M \to N$ which are singular Legendre fibrations
for some contact structure on $M$.
\end{prob}

In this section, we solve the above problem negatively.

\begin{prop}\label{prop:l}
If a $C^\infty$ stable map $f : M \to N$ has non-empty
singular point set, then it cannot be a singular Legendre fibration.
\end{prop}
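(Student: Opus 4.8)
The plan is to work entirely locally near an indefinite fold point and derive a contradiction from the Legendre condition. Since $f$ has non-empty singular point set and (being a stable map into a surface with no definite fold and no cusps) its only singularities are indefinite folds, it suffices to show that near a single indefinite fold point there is no contact structure making all fibers Legendrian. So I would pick an indefinite fold point $x \in S_1(f)$, choose the normal form coordinates from Definition~\ref{dfn1}(1): since $\dim M = 3$ and the fold is indefinite, $\lambda = 1$, so $f$ has the form
\begin{equation}
y_1 \circ f = x_1, \qquad y_2 \circ f = -x_2^2 + x_3^2 .
\end{equation}
The fibers $f^{-1}(y_1, y_2)$ are then the curves $\{x_1 = y_1,\ x_3^2 - x_2^2 = y_2\}$ in the $(x_2, x_3)$-plane — a hyperbola for $y_2 \neq 0$ and a pair of crossing lines for $y_2 = 0$ (the fold point itself lying on the singular fiber).

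The key computation is to write down the tangent lines to these fibers and ask that a contact plane field contain all of them. A fiber is a level curve of the submersion $(x_1,\ x_3^2 - x_2^2)$ away from $x_2 = x_3 = 0$, so its tangent direction at a point $p = (a, b, c)$ with $(b,c) \neq (0,0)$ is spanned by the vector $v(p) = b\,\partial_{x_2} + c\,\partial_{x_3}$ (the kernel of $dx_1$ and of $-2x_2\,dx_2 + 2x_3\,dx_3$). For a contact structure given as the kernel of a $1$-form $\alpha = P\,dx_1 + Q\,dx_2 + R\,dx_3$, the Legendre condition $\alpha(v(p)) = 0$ forces $bQ(p) + cR(p) = 0$ for all $p$ in a neighborhood, i.e.\ $x_2 Q + x_3 R \equiv 0$. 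This is a strong pointwise constraint on $Q$ and $R$. Then I would simply plug into the contact condition $\alpha \wedge d\alpha \neq 0$ and show it must vanish somewhere — in fact, I expect it to vanish along the whole plane $x_2 = x_3 = 0$ (the fold locus), or more strongly that the constraint $x_2 Q + x_3 R \equiv 0$ is incompatible with $\alpha$ being contact anywhere on that locus. Concretely, write $Q = x_3 H$, $R = -x_2 H$ for some smooth function $H$ (this is the general smooth solution of $x_2 Q + x_3 R = 0$, up to the subtlety that the solution sheaf of that equation is exactly this since $(x_2, x_3)$ is a regular sequence), substitute, and compute $\alpha \wedge d\alpha$ at a point of $\{x_2 = x_3 = 0\}$; the terms involving $H$ and its derivatives all carry a factor of $x_2$ or $x_3$ and die, leaving an expression that is forced to be zero.

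The main obstacle I anticipate is the algebra of solving $x_2 Q + x_3 R \equiv 0$ cleanly: one must be careful that a smooth (not merely formal) function vanishing appropriately really does factor as claimed, so that $Q = x_3 H$ and $R = -x_2 H$ with $H$ smooth. This is a standard division-type argument — $\{x_2 Q + x_3 R = 0\}$ in the ring of germs of smooth functions is generated by $(x_3 \partial_{x_2} - x_2 \partial_{x_3})$-type syzygies, which holds because $x_2, x_3$ form a regular sequence and smooth function germs are a nice enough ring for this Koszul relation — but it should be spelled out or cited. Once that is in hand, the contradiction with $\alpha \wedge d\alpha \neq 0$ on the fold locus is a short explicit computation, and since the fold locus is non-empty by hypothesis, this finishes the proof. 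I would also note at the end that this local obstruction immediately recovers the earlier-known fact that a singular Legendre fibration must have no definite fold (the same computation with $\lambda = 0$, i.e.\ $y_2 \circ f = x_2^2 + x_3^2$, would analogously obstruct things, or rather is handled separately), and that the upshot is: no stable map with singularities can be a singular Legendre fibration, answering Problem~\ref{prob3.4}.
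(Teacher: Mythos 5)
Your overall strategy is exactly the paper's: reduce to the local normal form at an indefinite fold, observe that the Legendre condition forces the contact form to annihilate the tangent field to the fibers, and deduce that $\alpha \wedge d\alpha$ vanishes at the fold point. (The paper reaches the final contradiction via the Hadamard lemma and differentiating the identity $y\varphi_1 + x\varphi_2 \equiv 0$ twice, whereas you propose factoring the syzygy directly; your division argument is legitimate for smooth germs -- restrict to $\{x_2=0\}$, apply Hadamard, then divide -- and is if anything a little cleaner.) However, there is a concrete error in your key formula: the tangent direction to the fiber $\{x_1 = \mathrm{const},\ x_3^2 - x_2^2 = \mathrm{const}\}$ at $p=(a,b,c)$ is \emph{not} $b\,\partial_{x_2} + c\,\partial_{x_3}$. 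That vector pairs with $-2x_2\,dx_2 + 2x_3\,dx_3$ to give $2(c^2 - b^2)$, which vanishes only on the singular fiber; the radial field is not tangent to the hyperbolas. The correct tangent field is $x_3\,\partial_{x_2} + x_2\,\partial_{x_3}$ (the paper's $v=(y,x,0)$ for the form $x^2-y^2$), so the constraint is $x_3 Q + x_2 R \equiv 0$, with general smooth solution $Q = x_2 H$, $R = -x_3 H$.

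This slip is not cosmetic: carried through literally, your constraint $x_2 Q + x_3 R \equiv 0$ gives $\alpha = P\,dx_1 + x_3 H\,dx_2 - x_2 H\,dx_3$, whose $\alpha\wedge d\alpha$ at the origin equals $-2P(0)H(0)\,dx_1\wedge dx_2\wedge dx_3$ -- generically nonzero (indeed $dx_1 + x_3\,dx_2 - x_2\,dx_3$ is the standard contact form), so no contradiction appears and the proof as written fails at its last step. With the corrected factorization $Q = x_2 H$, $R = -x_3 H$, both $d(x_2H)\wedge dx_2 = x_2\,dH\wedge dx_2$ and $d(x_3H)\wedge dx_3 = x_3\,dH\wedge dx_3$ vanish at the origin, $d\alpha|_0$ has no $dx_2\wedge dx_3$ component, and $\alpha\wedge d\alpha|_0 = 0$, which is the desired contradiction. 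So after swapping the two components of your tangent vector everything goes through, and the result agrees with (and slightly streamlines) the paper's computation.
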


\begin{proof}
Suppose $f$ is a singular Legendre fibration with respect
to a contact structure $\xi$ on $M$. Let $p \in S(f)$ be
a singular point, which is necessarily an indefinite fold point as
has been pointed out in \cite{Saeki06}.
There exist local coordinates $(x, y, z)$ around $p$, and
$(X, Y)$ around $f(p)$ such that $f$ has the form
$$X \circ f(x, y, z) = x^2 - y^2, \, Y \circ f(x, y, z) = z.$$
Suppose that the contact structure $\xi$ is locally given
by a non-degenerate $1$--form $\alpha$ of the form
$$\alpha = \varphi_1 dx + \varphi_2 dy + \varphi_3 dz$$
for some $C^\infty$ functions $\varphi_i$, $i = 1, 2, 3$, defined
locally around $p$.

First note that the fiber over $f(p)$ contains the set
$$\{(x, y, z)\,|\, x^2-y^2=0, z = 0\},$$
and hence the tangent vectors for the two crossing curve
segments at $p$ span the vector space defined by $dz = 0$
at $p = 0$. This implies that 
$$\varphi_1(0) = \varphi_2(0) = 0, \, \varphi_3(0) \neq 0.$$
Thus, at $p$, we have
$$\alpha \wedge d\alpha|_p = \varphi_3(0)\left(\frac{\partial \varphi_2}{\partial x}(0)
- \frac{\partial \varphi_1}{\partial y}(0)\right)dx \wedge dy \wedge dz|_p.$$
Since $\alpha$ is non-degenerate, we have
$$\frac{\partial \varphi_2}{\partial x}(0)
- \frac{\partial \varphi_1}{\partial y}(0) \neq 0.$$

On the other hand, the local vector field $v$
defined around $p$ by $v = (y, x, 0)$ is tangent
to the fibers. Therefore, $v$ should lie
on the planes defined by $\alpha = 0$.
This implies that 
$y \varphi_1 + x \varphi_2$ constantly vanishes.
Since $\varphi_1(0) = \varphi_2(0) = 0$, by the Hadamard lemma,
we have
$$\varphi_1 = x g_1 + y h_1 + zk_1, \, 
\varphi_2 = x g_2 + y h_2 + zk_2$$
for some $C^\infty$ functions $g_1, h_1, k_1, g_2, h_2$ and $k_2$ defined
near $p$. In this case, we have
$$\frac{\partial \varphi_1}{\partial y}(0) = h_1(0),\,
\frac{\partial \varphi_2}{\partial x}(0) = g_2(0).$$
As 
$$y \varphi_1 + x \varphi_2 = xy g_1 + y^2 h_1 + yz k_1
+ x^2 g_2 + xy h_2 + xz k_2$$
constantly vanishes, by differentiating the above function
with respect to $y$ twice and substituting $(x, y, z) = (0, 0, 0)$,
we see that $h_1(0) = 0$. Similarly, we have $g_2(0) = 0$.
Therefore, we have
$$\frac{\partial \varphi_2}{\partial x}(0)
- \frac{\partial \varphi_1}{\partial y}(0)
= g_2(0) - h_1(0) = 0,$$
which is a contradiction.
This completes the proof.
\end{proof}

The above proof shows, in fact, that there is no
contact structure which makes local fibers near
an indefinite fold all Legendrian.

Recall that if an empty singular point set is allowed, then
there do exist (non-singular) Legendre fibrations
(see \cite{Giroux}, \cite[Proposition~1.1.7]{ET}).

\section*{Acknowledgment}\label{ack}
The author would like to express his hearty thanks to
\.{I}nan\c{c} Baykur, whose comments drastically improved
the presentation of the article. The author also would like to thank
Rustam Sadykov and Masamichi Takase for
stimulating discussions and invaluable comments and questions.
The author has been supported in part by JSPS KAKENHI Grant Numbers 
JP23244008, JP23654028, JP15K13438, JP16K13754, JP16H03936, JP17H01090,
JP17H06128.

\end{document}